\newcommand{\newword}[1]{\textbf{\emph{#1}}}
\newcommand{\I}{\mathcal{I}}
\newtheorem{conj}{Conjecture}[section]
\newtheorem{theorem}[conj]{Theorem}
\newtheorem{proposition}[conj]{Proposition}
\newtheorem{lemma}[conj]{Lemma}
\newtheorem{corollary}[conj]{Corollary}
\theoremstyle{definition}
\newtheorem{definition}[conj]{Definition}
\newtheorem{remark}[conj]{Remark}
\begin{document}

\title{Rainbow Graphs and Switching Classes}

\author{Suho Oh, Hwanchul Yoo, and Taedong Yun}
\date{}
\maketitle

\begin{abstract}
A rainbow graph is a graph that admits a vertex-coloring such that every color appears exactly once in the neighborhood of each vertex. We investigate some properties of rainbow graphs. In particular, we show that there is a bijection between the isomorphism classes of $n$-rainbow graphs on $2n$ vertices and the switching classes of graphs on $n$ vertices.
\end{abstract}

\section{Introduction}

Throughout this paper, we will use the term \newword{graph} to refer to a graph on unlabeled vertices without loops or multiple edges, and \newword{labeled graph} to refer to a graph on labeled vertices without loops or multiple edges.
A vertex coloring (with $n$ colors) of a graph is called an \newword{$n$-rainbow coloring} if every color appears once, and only once, in each neighborhood of a vertex. Note that an $n$-rainbow coloring is not a proper coloring. A graph is called an \newword{$n$-rainbow} graph if the graph admits an $n$-rainbow coloring.

This term was initially coined by Woldar \cite{Woldar} but the same concept appeared earlier in Ustimenko \cite{Ustimenko} and in Lazebnik and Woldar \cite{Lazebnik} under the name of \emph{parallelotopic graph} and \emph{neighbor-complete coloring}. There are interesting instances of rainbow graphs appearing throughout mathematics and theoretical computer science literature. For example, one can naturally construct a rainbow graph from a group \cite{Woldar} or from a system of equations \cite{Lazebnik}, and the structure of such rainbow graphs has been shown to be closely related to the structure of the underlying algebraic structure.

We state here without proof some general facts about rainbow graphs, which are all easy to verify. Many of these facts can be found in \cite{Woldar}.

\begin{enumerate}
\item
An $n$-rainbow graph is $n$-regular.
\item
Each color occurs equally often in the vertex set of an $n$-rainbow graph.
\item
An $n$-rainbow graph contains a perfect matching, which is constructed by choosing edges whose endpoints have the same color.
\item
The number of vertices of an $n$-rainbow graph is a multiple of $2n$.
\end{enumerate}

On the other hand, \newword{switching} a vertex $v$ of a labeled graph $G$ means reversing the adjacency of $v$ and $w$ for every other vertex $w$: If $v$ and $w$ are connected then we delete the edge, otherwise we add an edge between them. This operation was defined by Seidel \cite{Seidel}. Two labeled graphs are \newword{switching equivalent} if one can be obtained from the other by a sequence of switching operations. Two unlabeled graphs are switching equivalent if we can label the vertices so that they become switching equivalent. The equivalent classes coming from this equivalence relation are called \newword{switching classes}. Mallows and Sloane \cite{Mallows} and Cameron \cite{Cameron} showed that the number of switching classes equals to the number of unlabeled Eulerian graphs. 

The main result of this paper is the following theorem.

\begin{theorem}
\label{thm:main}
The number of isomorphism classes of $n$-rainbow graphs with $2n$ vertices is the same as the number of switching classes of graphs with $n$ vertices.
\end{theorem}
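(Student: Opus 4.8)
The plan is to realize the bijection concretely: build from each graph $K$ on $[n]$ an $n$-rainbow graph $G(K)$ on $2n$ vertices, show $K\mapsto[G(K)]$ is a surjection onto isomorphism classes, and then prove it factors through and is injective on switching classes. Given $K$ on vertex set $[n]$, let $G(K)$ have vertex set $[n]\times\{+,-\}$ (write $i^+,i^-$ for $(i,+),(i,-)$), with an edge $i^+i^-$ for every $i$, edges $i^+j^-$ and $i^-j^+$ for every edge $ij$ of $K$, and edges $i^+j^+$ and $i^-j^-$ for every non-edge $ij$ of $K$. A one-line check shows $G(K)$ is $n$-rainbow under the colouring $i^{\pm}\mapsto i$: the neighbourhood of $i^{\pm}$ meets each colour class $\{j^+,j^-\}$ exactly once. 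Conversely, if $\Gamma$ is any $n$-rainbow graph on $2n$ vertices, then each colour occurs exactly twice (fact 2), the two vertices of a colour are adjacent (a short argument from the rainbow condition, as in fact 3), and between two distinct colour classes the rainbow condition forces a perfect matching; choosing a label in $[n]$ for each colour class and a distinguished vertex ``$+$'' in each then identifies $\Gamma$ with some $G(K)$. Hence $K\mapsto[G(K)]$ surjects onto the isomorphism classes of $n$-rainbow graphs on $2n$ vertices.

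Next I would record the easy compatibility. Switching $K$ at a vertex $i$ corresponds under $G$ to the vertex map $i^+\leftrightarrow i^-$ (identity elsewhere), and relabelling the vertices of $K$ corresponds to relabelling the fibres of $G(K)$; both are isomorphisms of the underlying graphs. So graphs on $[n]$ in the same switching class (in the sense of the introduction, allowing relabellings) yield isomorphic rainbow graphs, and $K\mapsto[G(K)]$ descends to a surjection from switching classes of graphs on $n$ vertices onto isomorphism classes of $n$-rainbow graphs on $2n$ vertices.

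The substantive step is injectivity: $G(K)\cong G(K')$ must force $K,K'$ into the same switching class. The key is a rigidity statement: every $n$-rainbow graph $\Gamma$ on $2n$ vertices is ``essentially uniquely'' coloured, meaning $\mathrm{Aut}(\Gamma)$ acts transitively on its rainbow partitions (unordered partitions into colour classes). Granting this, an isomorphism $\phi\colon G(K)\to G(K')$ can be post-composed with an automorphism of $G(K')$... or rather pre-composed with an automorphism of $G(K)$, so as to carry the standard rainbow partition of $G(K)$ to that of $G(K')$. A partition-preserving isomorphism induces a permutation $\bar\psi\in S_n$ on colour classes and, on each fibre, either preserves or swaps the two signs; unwinding, $\psi$ is the composite of a relabelling by $\bar\psi$ with a product of maps $i^+\leftrightarrow i^-$, which is exactly the statement that $K'$ is obtained from $K$ by a relabelling followed by a sequence of switchings.

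It remains to prove the rigidity statement, and this is where I expect the real work to be. Since colour classes are edges, a rainbow partition is precisely a perfect matching of $\Gamma$ all of whose edges are \emph{special}, i.e.\ have two endpoints with no common neighbour. Writing $\Gamma=G(K)$, a direct computation classifies the special edges: each $i^+i^-$ is special; an edge $i^+j^-$ is special exactly when $i,j$ are twins in $K$ ($ij$ an edge, same other neighbours); an edge $i^+j^+$ is special exactly when $i,j$ are anti-twins ($ij$ a non-edge, distinguished by every other vertex). The relation ``twin-or-anti-twin'' is an equivalence relation on $[n]$, and, after replacing $K$ by a switching-equivalent graph — harmless, since rigidity depends only on $\Gamma$, and a switching preserves the twin-or-anti-twin partition — we may assume each equivalence class $T$ consists of mutual twins. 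Then the fibre of $G(K)$ over $T$ is a complete bipartite graph $K_{|T|,|T|}$ between its $+$ vertices and its $-$ vertices, its special edges are all of its edges, and every special edge of $\Gamma$ lies inside one such fibre. Thus a rainbow partition restricts to a perfect matching of each $K_{|T|,|T|}$; since each transposition $i^-\leftrightarrow j^-$ with $i,j\in T$ is an automorphism of $G(K)$, all these matchings lie in one $\mathrm{Aut}(\Gamma)$-orbit, and assembling over the classes $T$ gives transitivity on all rainbow partitions. The main obstacle is this classification of special edges together with verifying that the transpositions $i^-\leftrightarrow j^-$ (for twins $i,j$) are indeed automorphisms; the rest is bookkeeping.
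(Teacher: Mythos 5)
Your proposal is correct, and it reaches the theorem by a genuinely different route in the one place where the real work lies. The construction $K\mapsto G(K)$ is the paper's map $\psi$ (up to the immaterial choice of putting the ``parallel'' edges on edges versus non-edges of $K$, i.e.\ up to complementing $K$), and your surjectivity and switching-compatibility arguments match the paper's. For injectivity, however, the paper passes to Seidel matrices: from $\widetilde{A}=P\widetilde{B}P^{-1}$ it extracts $Z=(P_1-P_2-P_3+P_4)/2$, proves $A-I=Z(B-I)Z^{T}$, observes that $Z$ is a ``signed half-permutation matrix,'' and then ``integrates'' $Z$ to an honest signed permutation matrix $Q$ with $A=QBQ^{T}$. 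You instead prove a combinatorial rigidity statement --- $\mathrm{Aut}(\Gamma)$ acts transitively on the rainbow partitions of $\Gamma$ --- by identifying rainbow partitions with perfect matchings into edges whose endpoints share no neighbour, classifying those edges via twins and anti-twins of $K$, normalizing within the switching class so each twin-or-anti-twin class is a set of mutual twins, and exhibiting enough automorphisms (the transpositions $i^{-}\leftrightarrow j^{-}$ for twins $i,j$) to move any fibre matching to any other. I checked the claims you deferred: the special-edge classification is as you state; ``twin-or-anti-twin'' is indeed an equivalence relation (in Seidel terms it is $a_{ik}=a_{ij}a_{jk}$ for all $k\neq i,j$, which is visibly symmetric and transitive) and is switching-invariant; the normalization is achieved by switching at a suitable subset of each class; and the transpositions are automorphisms. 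So there is no gap. The trade-off: the paper's matrix argument is shorter and avoids case analysis once the identity $A-I=Z(B-I)Z^{T}$ is in hand, while your argument stays entirely graph-theoretic and yields strictly more information --- a description of \emph{all} rainbow colourings of an $n$-rainbow graph on $2n$ vertices and the fact that such a graph is uniquely colourable up to automorphism, which is precisely the statement that makes the bijection work. The two are not unrelated: the $\pm\tfrac12$ entries of the paper's $Z$, which force the ``integration'' step, occur exactly where your twin/anti-twin ambiguity allows more than one rainbow partition.
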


\begin{corollary}
The number of isomorphism classes of $n$-rainbow graphs with $2n$ vertices is the same as the number of unlabeled Eulerian graphs (every vertex has even degree) with $n$ vertices.
\end{corollary}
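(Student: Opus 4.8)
The plan is to construct an explicit bijection sending the switching class of a graph $G$ on $\{1,\dots,n\}$ to the isomorphism class of an $n$-rainbow graph $R(G)$ on $2n$ vertices. I would first record the structure of an $n$-rainbow graph $R$ with $2n$ vertices: by the first three facts above it is $n$-regular, each color occurs exactly twice, and the equal-colored pairs form a perfect matching; and since every color occurs exactly once in each neighborhood, between any two distinct color classes $\{u,v\}$ and $\{x,y\}$ there are exactly two edges, forming a perfect matching on those four vertices, so the ``link'' between two color classes is one of the two $K_{2,2}$-matchings (call it \emph{parallel} or \emph{crossed} once the vertices are labeled). Conversely, from a graph $G$ on $\{1,\dots,n\}$ I build $R(G)$ on $\{1,\dots,n\}\times\ZZ/2\ZZ$: put $g_{ii}=1$, and for $i\neq j$ set $g_{ij}\in\ZZ/2\ZZ$ to be $1$ if $ij\in E(G)$ and $0$ otherwise; join $(i,a)$ and $(j,b)$ exactly when $a+b\equiv g_{ij}$, and color $(i,a)$ by $i$. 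A direct check shows this is an $n$-rainbow coloring whose link between colors $i$ and $j$ is crossed precisely when $ij\in E(G)$. Note that any rainbow coloring of any $n$-rainbow graph $R$ on $2n$ vertices, together with a labeling of the $n$ color classes by $\{1,\dots,n\}$ and of the two vertices within each class, likewise yields a graph on $\{1,\dots,n\}$, well defined up to relabeling and switching; I call the resulting switching class the \emph{type} of the coloring.

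Second, I would verify that $G\mapsto R(G)$ is constant on switching classes and surjective onto isomorphism classes. Relabeling $G$ by a permutation $\sigma$ gives $R(G)\cong R(G^{\sigma})$ via $(i,a)\mapsto(\sigma(i),a)$, and switching $G$ at a vertex $v$ gives an isomorphic rainbow graph via the map transposing the two vertices of color $v$ and fixing the rest — substituting the switched $g$-values into $a+b\equiv g_{ij}$ makes this a one-line computation. For surjectivity: given an $n$-rainbow $R$ on $2n$ vertices, choose a rainbow coloring and label the two vertices of each color as $(i,0),(i,1)$; the structure recorded above lets one read off a graph $G$ (put $ij\in E(G)$ when the link between colors $i$ and $j$ is crossed) with $R\cong R(G)$. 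Moreover the type of the natural coloring of $R(G)$ is exactly the switching class of $G$.

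The crux — and the step I expect to be the main obstacle — is injectivity: $R(G)\cong R(G')$ should imply $G$ and $G'$ are switching equivalent. I would reduce this to a \emph{coloring lemma}: any two rainbow colorings of a fixed $n$-rainbow graph $R$ on $2n$ vertices have the same type. Granting the lemma, an isomorphism $R(G)\xrightarrow{\sim}R(G')$ pushes the natural coloring of $R(G)$ (whose type is the switching class of $G$) forward to a rainbow coloring of $R(G')$ of the same type, which by the lemma equals the type of the natural coloring of $R(G')$, namely the switching class of $G'$; hence $G\sim G'$. A rainbow coloring of $R$ is the same data as a partition of $V(R)$ into $n$ adjacent pairs (its color-matching $M$) together with a bijection to the color set, so two colorings with the same color-matching automatically have the same type; the content of the lemma is thus entirely about two colorings with \emph{different} color-matchings $M\neq M'$.

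For that case the plan is to coordinatize $R$ by $M$, identifying $R=R(H)$ with $H$ a type representative of the $M$-coloring, and to examine how $M'$ sits inside $R(H)$: for each $M$-color $i$, the $M'$-partners of $(i,0)$ and $(i,1)$ lie in one or two $M$-colors, which organizes the discrepancy of $M'$ relative to $M$ into a graph $\Gamma$ on $\{1,\dots,n\}$ that is a disjoint union of isolated vertices (the colors where $M=M'$), $2$-cycles, and longer cycles. I would then show, one $\Gamma$-component at a time, that replacing the $M$-structure by the $M'$-structure changes $H$ only by a composition of switchings followed by a relabeling: an isolated vertex does nothing, a $2$-cycle transposes two colors and switches a bounded number of vertices, and a long cycle effects a cyclic relabeling together with appropriate switchings. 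Making this component analysis precise and uniform over all cycle lengths is the part I expect to demand the most care. A cleaner alternative, which I would fall back on if the bookkeeping becomes unwieldy, is to recognize $G\mapsto R(G)$ as the classical correspondence between $H^1(K_n;\ZZ/2\ZZ)$ — equivalently switching classes, equivalently unlabelled even graphs — and $\ZZ/2\ZZ$-covers of $K_n$ with every loop-class active, under which switching of $G$ is exactly the addition of a coboundary; the coloring lemma then becomes the assertion that every presentation of $R$ as such a cover carries the same cohomology class.
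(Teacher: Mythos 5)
Your overall architecture coincides with the paper's: your $G\mapsto R(G)$ is the paper's $G\mapsto\widetilde{G}$ (your $a+b\equiv g_{ij}$ rule reproduces conditions (2)--(4) of Section 2), your well-definedness and surjectivity arguments are exactly those of Section 2, and, like the paper, you must finish the corollary by invoking the Mallows--Sloane/Cameron identification of switching classes with unlabelled even graphs --- a step you only gesture at parenthetically in your fallback paragraph, though it is genuinely needed, since what you construct is a bijection with switching classes and the bijection never touches even graphs. Where you genuinely diverge is the injectivity step, which is also where the paper spends all of Section 3. The paper translates the problem into Seidel matrices: an isomorphism $\widetilde{G}\cong\widetilde{G'}$ becomes $\widetilde{A}=P\widetilde{B}P^{-1}$, the four blocks of $P$ yield the ``signed half-permutation'' matrix $Z=(P_1-P_2-P_3+P_4)/2$ with $A-I=Z(B-I)Z^T$, and an ``integration'' of $Z$ to an honest signed permutation matrix $Q$ gives $A=QBQ^T$. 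You instead propose a purely combinatorial \emph{coloring lemma} --- any two rainbow colorings of a fixed $R$ have the same type --- proved by decomposing the symmetric difference of the two color-matchings into alternating cycles and treating one cycle at a time. This route does work, and it buys structural information that the linear algebra hides: if an alternative matching pairs $(i,1)$ with $(j,0)$, the rainbow condition at every other vertex forces $g_{ik}=g_{jk}$ for all $k\neq i,j$, so alternative color-matchings exist only along cycles of pairwise-adjacent ``switching twins'' of $H$, and the net effect of re-pairing along a cycle of colors is precisely switching $H$ at that set of colors, composed with a relabeling --- whence equal types. The paper's route is shorter once the integration lemma is available; yours is longer but explains \emph{why} injectivity holds and characterizes when a rainbow graph admits more than one rainbow coloring.

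Two things must be supplied before your sketch is a proof. First, the crux is only outlined: you need to actually perform the per-cycle computation (it is a finite check in the $a+b\equiv g_{ij}$ coordinates, with the answer stated above: within-cycle adjacencies are preserved, cycle-to-outside adjacencies are all flipped, i.e.\ a switching at the cycle's color set), and you should note that replacing $M$ by $M'$ on a single alternating cycle while keeping $M$ elsewhere does produce a valid intermediate color-matching --- it does, because the rainbow condition is verified pair by pair and every pair of the hybrid matching belongs to $M$ or to $M'$. Second, make the appeal to the classical count of switching classes by unlabelled even graphs an explicit citation rather than an aside.
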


We will explicitly construct a bijective map between the classes.

\section{Map from switching classes to $n$-rainbow graphs}

In this section, we introduce a map that sends a switching class of graphs having $n$ vertices to an isomorphism class of $n$-rainbow graphs having $2n$ vertices. This map will be shown to be a bijection in Section 3.

Choose a graph $G=(V,E)$ that has $n$ vertices without loops and multiple edges. Label the vertices from $1$ to $n$. Now let $\widetilde{G} = (\widetilde{V}, \widetilde{E})$ be a graph having $2n$ vertices such that:
\begin{enumerate}
\item the vertices are labeled by $\{1,\ldots,n,1',\ldots,n'\}$,
\item $(i,i') \in \widetilde{E}$,
\item $(i,j),(i',j') \in \widetilde{E}$ if $(i,j)\in E$,
\item $(i',j),(i,j') \in \widetilde{E}$ if $(i,j) \notin E$.
\end{enumerate}

Then the underlying unlabeled graph of $\widetilde{G}$ is $n$-rainbow because we can color the vertices labeled by $i$ and $i'$ with the color $i$, and this is clearly a rainbow coloring. An example is given in Figure~\ref{fig:map}.

\begin{figure}
	\centering
		\includegraphics[width=0.6\textwidth]{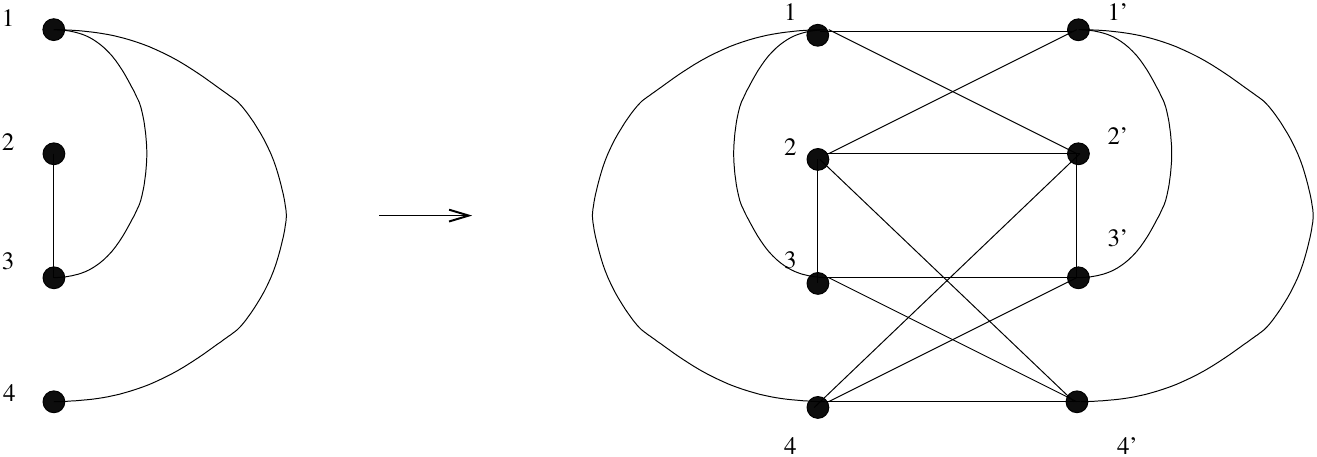}	
	\caption{The map that sends $G$ to $\widetilde{G}$.}
	\label{fig:map}
\end{figure}

We denote by $[G]$ the switching class of $G$. Now let us define the desired map $\psi$ by sending $[G]$ to the underlying unlabeled graph of $\widetilde{G}$. To see that $\psi$ is well-defined, let $H$ be the graph obtained from $G$ by switching at a vertex labeled by $i$. Let $\widetilde{H}$ be the graph with $2n$ vertices obtained from $H$ by the previous method. Now if we interchange the labels of $i,i'$ in $\widetilde{H}$, we recover $\widetilde{G}$. An example of this phenomenon is given in Figure~\ref{fig:welldef}.

\begin{figure}
	\centering
		\includegraphics[width=0.6\textwidth]{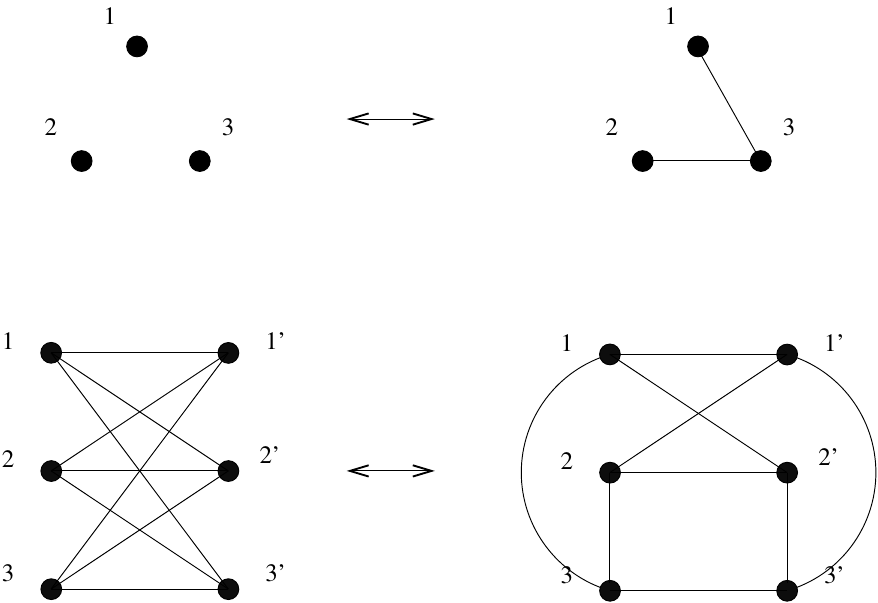}	
	\caption{The exchange equivalence.}
	\label{fig:welldef}
\end{figure}

\begin{proposition}
\label{prop:surj}
$\psi$ is surjective.
\end{proposition}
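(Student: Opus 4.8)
The plan is to recover, from an arbitrary $n$-rainbow graph $R$ on $2n$ vertices, a graph $G$ on $n$ vertices with $\psi([G])$ equal to the unlabeled graph underlying $R$. First I would fix a rainbow coloring of $R$. Since $R$ is $n$-regular (general fact 1) and every color occurs exactly once in each neighborhood, a neighborhood of size $n$ exhibits all the colors, so the coloring uses exactly $n$ colors; by general fact 2 each color class then has size $2n/n = 2$. Name the colors $1,\ldots,n$.

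The structural observation driving everything is that each vertex $v$ has a \emph{unique} neighbor of its own color (that color occurs once in $N(v)$), and since the color class of $v$ contains exactly one other vertex, that vertex must be this unique same-colored neighbor. Thus the two vertices of color $i$ are adjacent, and I would label them $i$ and $i'$. With respect to the vertex set $\{1,\ldots,n,1',\ldots,n'\}$ this immediately realizes conditions (1) and (2) in the definition of $\widetilde{G}$. I then define $G=(\{1,\ldots,n\},E)$ by putting $(i,j)\in E$ exactly when $i$ and $j$ are adjacent in $R$; since $R$ is simple, so is $G$.

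Next I would check that $R$, with this labeling, is literally $\widetilde{G}$, i.e. that conditions (3) and (4) hold. Fix $i\neq j$ and consider the four vertices $i,i',j,j'$: each of $i,i'$ has a unique neighbor of color $j$ (either $j$ or $j'$), and symmetrically for $j,j'$ with color $i$. If $i$ is adjacent to $j$ in $R$, then $i$ is not adjacent to $j'$ and $j$ is not adjacent to $i'$, which forces $i'$ adjacent to $j'$ and gives exactly the edges prescribed by (3); if $i$ is not adjacent to $j$, then $i$ is adjacent to $j'$, so $j'$ is not adjacent to $i'$, which forces $i'$ adjacent to $j$, giving exactly the edges prescribed by (4). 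Hence $\widetilde{E}=E(R)$, so $\psi([G])$ is the unlabeled graph underlying $R$, and $\psi$ is surjective.

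I expect the only delicate point to be the case analysis in the last step, where one must keep track of all four potential edges inside each set $\{i,i',j,j'\}$ and apply the uniqueness of each color in each neighborhood without slips. It is also worth remarking that the choice of which vertex of a color class is called $i$ versus $i'$ does not matter: a different choice alters $G$ only by switching at the affected vertices, and $\psi$ has already been shown to be well-defined on switching classes, so any such choice produces a valid preimage.
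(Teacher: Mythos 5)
Your proposal is correct and follows essentially the same route as the paper: pick one vertex from each color class (your vertices $1,\ldots,n$), take the induced subgraph as $G$, and observe that $\psi([G])$ recovers the rainbow graph. The paper dismisses the verification with ``clearly,'' whereas you supply the details --- that each color class has exactly two adjacent vertices and the four-vertex case analysis for conditions (3) and (4) --- all of which check out.
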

\begin{proof}
For any $n$-rainbow graph $\widetilde{G}$ on $2n$ vertices, assign an $n$-rainbow coloring on its vertex set. Now let $G$ be an induced subgraph of $\widetilde{G}$ having $n$ vertices on which each color appears once. Clearly, $\psi([G]) = \widetilde{G}$.
\end{proof}

\begin{remark}
A similar map was constructed in \cite{CC}, with a different coloring of graphs, to prove that the switching equivalence problem is polynomial time equivalent to the graph isomorphism problem.
\end{remark}

\section{The main proof}
In this section, we will prove our main result, Theorem~\ref{thm:main}. To do so, let us recall the definition of Seidel matrices, originally defined in \cite{Seidel}.

\begin{definition}
Let $G$ be a graph over $n$ vertices. The \newword{Seidel matrix} of $G$ is a symmetric $n\times n$ matrix with entries $a_{ij}$ such that:
\begin{itemize}
\item $a_{ii} = 0 $ for all $i \in [n]$,
\item $a_{ij} = a_{ji} = 1 $ if $(i,j) \in E(G)$,
\item $a_{ij} = a_{ji} = -1$ if $(i,j) \notin E(G)$.
\end{itemize}

\end{definition}

\bigskip

\begin{definition}
For a Seidel matrix $A$ we define $\widetilde{A}$ as
$$\widetilde{A} := \begin{pmatrix} A&I-A\\I-A&A \end{pmatrix}.$$
\end{definition}
Note that $\widetilde{A}$ is also a Seidel matrix. In particular, if $A$ is the Siedel matrix of $G$, then $\widetilde{A}$ is the Siedel matrix of $\tilde{G}$.

Let $A$ and $B$ be Seidel matrices of some graphs $G$ and $G'$, respectively. Then $G$ and $G'$ are isomorphic if and only if $B = PAP^{-1}$ for some permutation matrix $P$. Recall that the \emph{signed} permutation matrices are the matrices we get from permutation matrices by replacing $1$'s with $\pm 1$'s. Since switching at vertex $i$ of a graph corresponds to changing signs of entries in $i$-th row and $i$-th column of its Seidel matrix, $G$ and $G'$ are switching equivalent if and only if $B = QAQ^{-1}$ for some signed permutation matrix $Q$.

In order to prove Theorem~\ref{thm:main}, it only remains to prove that $\psi$ is injective, since surjectivity is given by Proposition~\ref{prop:surj}. We need to show that if $\tilde{G}$ is isomorphic to $\tilde{G'}$, then $G$ and $G'$ are in the same switching class. This can be restated in the following linear algebra language:

\begin{proposition}
\label{prop:3.3}
If two Seidel matrices $A$ and $B$ satisfy $\widetilde{A} = P\widetilde{B}P^{-1}$ where $P$ is a $2n\times 2n$ permutation matrix, then there exists an $n\times n$ signed permutation matrix $Q$ such that $A = QBQ^{-1}$.

\end{proposition}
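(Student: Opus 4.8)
The plan is to reinterpret the statement graph-theoretically and exploit the structure of the doubling $G \mapsto \widetilde G$. Write $\pi$ for the permutation of $\widetilde V = \{1,\dots,n,1',\dots,n'\}$ encoded by $P$; the hypothesis says $\pi \colon \widetilde G \to \widetilde{G'}$ is a graph isomorphism. A signed permutation matrix $Q$ with $A = QBQ^{-1}$ amounts to the same thing as an isomorphism $\widetilde G \to \widetilde{G'}$ sending every pair $\{i,i'\}$ to some pair $\{j,j'\}$: this is the matrix form of the fact, recorded in Section~2, that switching $G$ at $i$ corresponds to interchanging $i$ and $i'$ in $\widetilde G$ (together with the evident compatibility with relabelling vertices). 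So it suffices to upgrade the given $\pi$ to a \emph{pair-preserving} isomorphism $\widetilde G \to \widetilde{G'}$.

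Call vertices $u,v$ of a graph on vertex set $W$ \emph{antitwins} if $N(v) = W \setminus N(u)$. Directly from rules (2)--(4) one checks that $N_{\widetilde G}(i') = \widetilde V \setminus N_{\widetilde G}(i)$ for each $i$; hence in $\widetilde G$ the matching $M_G = \{\,\{i,i'\} : i \in [n]\,\}$ consists of antitwin pairs, and similarly $M_{G'}$ in $\widetilde{G'}$. Since the antitwin relation is isomorphism-invariant, $M'' := \pi^{-1}(M_{G'})$ is again a perfect matching of $\widetilde G$ all of whose pairs are antitwins. Thus the proposition reduces to the claim that \emph{any two perfect matchings of a graph whose pairs are all antitwins differ by an automorphism}: given such $\phi \in \mathrm{Aut}(\widetilde G)$ with $\phi(M_G) = M''$, the composite $\pi \circ \phi$ is the pair-preserving isomorphism we want.

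To prove the claim, observe that ``$N(v) \in \{N(u),\, W\setminus N(u)\}$'' (i.e.\ $u,v$ are twins or antitwins) is an equivalence relation on $W$; within a class $C$ only two neighborhoods occur, a set $X$ and its complement, so $C$ splits canonically as $C^+ \sqcup C^-$ according to which of the two a vertex realizes. Two vertices on the same side have the same neighborhood, hence (no loops) are non-adjacent, whereas every vertex of $C^+$ is adjacent to every vertex of $C^-$; so the induced subgraph on $C$ is complete bipartite with sides $C^+, C^-$, and every antitwin edge joins the two sides of one class. Consequently a perfect matching of $\widetilde G$ with all pairs antitwin restricts on each $C$ to a perfect matching between $C^+$ and $C^-$, so $|C^+|=|C^-|$ and each restriction is a bijection $C^+ \to C^-$. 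Given two such matchings $M_1, M_2$, the permutation $\sigma_C := M_2|_C \circ (M_1|_C)^{-1}$ of $C^-$ permutes mutual false twins, hence preserves all adjacencies; as the supports of the $\sigma_C$ are pairwise disjoint, their product is an automorphism of $\widetilde G$ carrying $M_1$ to $M_2$.

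The only genuinely delicate point is this final paragraph: one must verify carefully that the induced subgraph on an antitwin class is complete bipartite with the stated sides and that relabelling within the $C^-$'s extends to an automorphism of \emph{all} of $\widetilde G$, not merely of one bipartite piece. Everything else -- the reduction to pair-preserving isomorphisms, the isomorphism-invariance of the antitwin relation, and the identity $N_{\widetilde G}(i') = \widetilde V \setminus N_{\widetilde G}(i)$ -- is immediate from the constructions in Sections~2 and~3.
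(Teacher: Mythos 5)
Your proposal is correct, but it proves Proposition~\ref{prop:3.3} by an entirely different route from the paper. The paper stays in linear algebra: from $\widetilde{A}=P\widetilde{B}P^{-1}$ it extracts the identity $A-I=Z(B-I)Z^{T}$ with $Z=(P_1-P_2-P_3+P_4)/2$, observes that $Z$ is a ``signed half-permutation matrix,'' and then rounds $Z$ to a genuine signed permutation matrix $Q$ (an ``integration'') via an entry-size argument showing that the rounding does not change the product. Your argument instead works with the graphs themselves: you identify the intrinsic invariant that makes the construction $G\mapsto\widetilde{G}$ reversible up to switching, namely that $\{i,i'\}$ is an antitwin pair ($N(i')=\widetilde{V}\setminus N(i)$), and you show that any two perfect matchings by antitwin pairs differ by an automorphism, because each antitwin class induces a complete bipartite graph whose two sides consist of mutual false twins and can therefore be permuted freely. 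I checked the delicate points you flag: the relation ``twin or antitwin'' is indeed an equivalence, the two neighborhoods $X$ and $\widetilde{V}\setminus X$ occurring in a class are always distinct (no loops), the induced subgraph on a class is complete bipartite with the stated sides, and a product of permutations of the $C^-$'s (which are sets of mutual false twins with pairwise disjoint supports) extends by the identity to an automorphism of all of $\widetilde{G}$ carrying $M_1$ to $M_2$; the reduction of ``$A=QBQ^{-1}$ for signed permutation $Q$'' to ``pair-preserving isomorphism $\widetilde{G}\to\widetilde{G'}$'' also checks out sign by sign. What each approach buys: the paper's computation is shorter once the ad hoc notions of half-permutation and integration are set up, and needs no case analysis on the structure of $\widetilde{G}$; yours is longer to write out carefully but more explanatory --- it shows that the antitwin matching in $\widetilde{G}$ is canonical up to automorphism, which is the conceptual reason $\psi$ is injective, and as a by-product it describes the automorphisms of $\widetilde{G}$ relating the different rainbow colorings. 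Either proof would be acceptable; if you write yours up, the complete-bipartite/false-twin paragraph is the part to present in full detail.
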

We prove this proposition by a sequence of lemmas given below.
\begin{lemma}
\label{lem:3.4}
Let $P = \begin{pmatrix} P_1&P_2\\ P_3&P_4 \end{pmatrix}$ be a $2n\times 2n$ permutation matrix, where $P_1,\ldots, P_4$ are $n\times n$ matrices. The following equations hold:
\begin{enumerate}
\item[1)]
$P_1 P_3^T = 0$, $P_2 P_4^T = 0$, $P_3 P_1^T = 0$, $P_4 P_2^T = 0$.
\item[2)]
$P_1 P_1^T + P_2 P_2^T = I$, $P_3 P_3^T + P_4 P_4^T = I$.
\end{enumerate}
\end{lemma}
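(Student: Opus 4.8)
The plan is to deduce everything from the single structural fact that a permutation matrix is orthogonal: since $P$ is a $2n \times 2n$ permutation matrix we have $P P^T = I$. Writing this identity out in $n \times n$ blocks gives
$$P P^T = \begin{pmatrix} P_1 P_1^T + P_2 P_2^T & P_1 P_3^T + P_2 P_4^T \\ P_3 P_1^T + P_4 P_2^T & P_3 P_3^T + P_4 P_4^T \end{pmatrix} = \begin{pmatrix} I & 0 \\ 0 & I \end{pmatrix}.$$
Comparing the diagonal blocks immediately yields part 2), namely $P_1 P_1^T + P_2 P_2^T = I$ and $P_3 P_3^T + P_4 P_4^T = I$, while comparing the off-diagonal blocks gives $P_1 P_3^T + P_2 P_4^T = 0$ and $P_3 P_1^T + P_4 P_2^T = 0$.

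It then remains to split each of these two vanishing sums into its two summands, which is where I would invoke the fact that the entries of $P$ lie in $\{0,1\}$. Each entry of a product such as $P_1 P_3^T$ is a sum of products of entries of $P$, hence a non-negative real number; so $P_1 P_3^T$ and $P_2 P_4^T$ are entrywise non-negative matrices whose sum is the zero matrix, forcing $P_1 P_3^T = 0$ and $P_2 P_4^T = 0$, and the same argument applied to the other off-diagonal block gives $P_3 P_1^T = 0$ and $P_4 P_2^T = 0$. (Combinatorially, the $(i,j)$ entry of $P_1 P_3^T$ counts the columns $k$ in which both row $i$ and row $n+j$ of $P$ carry a $1$; since $i \ne n+j$ and each column of a permutation matrix contains a unique $1$, there are no such columns.) This finishes part 1).

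There is no genuine obstacle here; the only points requiring care are that orthogonality alone does \emph{not} give part 1) --- the individual vanishing really does use non-negativity of the entries --- and that one must keep the transposes and block positions straight when expanding $P P^T$. (Starting instead from $P^T P = I$ would produce the analogous identities for the column blocks of $P$; the row-block versions recorded above are the ones the subsequent lemmas will use.)
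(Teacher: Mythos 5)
Your proof is correct and follows the same route as the paper: expand $PP^T = I$ in $n\times n$ blocks, read off part 2) from the diagonal blocks, and use the entrywise non-negativity of the $P_i$ to split each vanishing off-diagonal sum into its two vanishing summands. The combinatorial aside about columns containing a unique $1$ is a nice sanity check but adds nothing beyond the paper's argument.
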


\begin{proof}
From $PP^T = I$ we have
$$\begin{pmatrix}P_1P_1^T + P_2P_2^T & P_1P_3^T + P_2 P_4^T \\ P_3 P_1^T + P_4 P_2^T & P_3P_3^T + P_4P_4^T \end{pmatrix} = \begin{pmatrix}I&0\\0&I\end{pmatrix}.$$
The claim follows from the fact that every entry of each of $P_i$'s is nonnegative.
\end{proof}

\begin{lemma}\label{lem:3.5}
In the assumption of Proposition \ref{prop:3.3} and Lemma \ref{lem:3.4}, let $Z$ denote $(P_1 - P_2 - P_3 + P_4)/2$. Then, we have $$A-I = Z (B-I) Z^T.$$
\end{lemma}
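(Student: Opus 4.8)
The plan is to start from the block equation $\widetilde{A}P^T = P^T\widetilde{A}$ coming from the hypothesis $\widetilde{A} = P\widetilde{B}P^{-1}$ (using $P^{-1}=P^T$), expand it into its four $n\times n$ blocks, and then take a suitable linear combination of those four block equations so that the matrix $Z=(P_1-P_2-P_3+P_4)/2$ appears naturally. Writing $\widetilde{A}=\begin{pmatrix}A & I-A\\ I-A & A\end{pmatrix}$ and similarly for $\widetilde{B}$, the relation $\widetilde{A}P=P\widetilde{B}$ (equivalently $P^T\widetilde{A}=\widetilde{B}P^T$, whichever is cleaner) unpacks into four identities relating $A$, $B$, and the blocks $P_1,\dots,P_4$. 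The key algebraic observation is that $A$ and $I-A$ combine as $A-(I-A)=2A-I=2(A-I)+I$, so forming the combination (block row $1$) $-$ (block row $2$), and (block column $1$) $-$ (block column $2$), isolates $2A-I$ on one side and $2B-I$ sandwiched between $P_1-P_2-P_3+P_4$ and its transpose on the other. Dividing by $4$ turns $2A-I$ into $(A-I)+\tfrac12 I$ and $2B-I$ into $(B-I)+\tfrac12 I$, and the leftover $I$-terms must be shown to cancel.

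Concretely, I would first record from $P\widetilde{B}=\widetilde{A}P$ the block identities
\begin{align*}
AP_1 + (I-A)P_3 &= P_1 B + P_2(I-B),\\
AP_2 + (I-A)P_4 &= P_1(I-B) + P_2 B,
\end{align*}
and the analogous pair from the bottom block row with $P_3,P_4$ in place of $P_1,P_2$ on the left. Subtracting the second from the first in each pair produces $A(P_1-P_2) - (I-A)(P_4-P_3) = (P_1-P_2)B - (P_1-P_2)(I-B)$ type relations; combining the top-row and bottom-row results with the right signs yields $(2A-I)(P_1-P_2-P_3+P_4) = (P_1-P_2-P_3+P_4)(2B-I)$ after using that $P_1-P_2-P_3+P_4$ also satisfies a transpose-side identity. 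Then I would multiply on the right by $(P_1-P_2-P_3+P_4)^T/4$ and invoke Lemma~\ref{lem:3.4}: parts (1) and (2) give $\tfrac14(P_1-P_2-P_3+P_4)(P_1-P_2-P_3+P_4)^T = \tfrac14\big(\sum P_iP_i^T - \text{cross terms}\big)$, and the cross terms $P_1P_3^T$, $P_2P_4^T$, etc. vanish while $P_1P_1^T+P_2P_2^T=I$ and $P_3P_3^T+P_4P_4^T=I$, so $ZZ^T = \tfrac14(I+I)\cdot$(something)$=$ ... — more precisely this computation should collapse to $ZZ^T$ being exactly what is needed so that the stray $I$ terms match up and $A-I = Z(B-I)Z^T$ falls out.

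The main obstacle I anticipate is bookkeeping: getting the signs and the left/right placement of $P$-blocks consistent between the "multiply $P$ on the left" identities and the "transpose / multiply on the right" identities, and in particular verifying that $Z$ (not $Z^T$) is what multiplies $(B-I)$ on the left while $Z^T$ multiplies on the right. This requires also extracting a second family of block relations — either from $\widetilde{B}P^T = P^T\widetilde{A}$ or by transposing, using that $A,B,\widetilde A,\widetilde B$ are symmetric — to control the right-hand factor. Once both families are in hand, the cancellation of the $I$-terms should be forced by Lemma~\ref{lem:3.4}(2), since $ZZ^T$ and $Z^TZ$ will turn out to have a clean closed form; I would double-check that $Z$ need not itself be orthogonal (only the relation $A-I = Z(B-I)Z^T$ is claimed here, with $Z$ being a genuine signed permutation matrix deferred to a later lemma).
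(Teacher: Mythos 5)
Your derivation of the intertwining relation is fine: subtracting the block identities of $\widetilde{A}P=P\widetilde{B}$ as you describe does give $(2A-I)(P_1-P_2-P_3+P_4)=(P_1-P_2-P_3+P_4)(2B-I)$, i.e.\ $(2A-I)\cdot 2Z = 2Z\cdot(2B-I)$, which simplifies to $AZ=ZB$. The gap is in the last step. Multiplying on the right by $Z^T$ gives $Z(B-I)Z^T = ZBZ^T - ZZ^T = AZZ^T - ZZ^T = (A-I)ZZ^T$, so to conclude $A-I=Z(B-I)Z^T$ you would need $(A-I)(ZZ^T-I)=0$, and the cancellation you hope for does not come from Lemma~\ref{lem:3.4}: expanding $4ZZ^T$ only kills the terms $P_1P_3^T, P_3P_1^T, P_2P_4^T, P_4P_2^T$ and collapses $P_1P_1^T+P_2P_2^T$ and $P_3P_3^T+P_4P_4^T$ to $I$ each, leaving $4ZZ^T = 2I - (P_1P_2^T+P_2P_1^T) - (P_3P_4^T+P_4P_3^T) + (P_1P_4^T+P_4P_1^T) + (P_2P_3^T+P_3P_2^T)$, which is not $4I$. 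Indeed $Z$ is a signed half-permutation matrix and these are generally not orthogonal. A concrete admissible example: for $n=2$ let $P$ fix $1,2$ and swap $1',2'$; then $P\widetilde{B}P^{-1}=\widetilde{A}$ holds with $A=B=\left(\begin{smallmatrix}0&-1\\-1&0\end{smallmatrix}\right)$, while $Z=\frac12\left(\begin{smallmatrix}1&1\\1&1\end{smallmatrix}\right)$ and $ZZ^T=\frac12\left(\begin{smallmatrix}1&1\\1&1\end{smallmatrix}\right)\neq I$. (The identity $(A-I)(ZZ^T-I)=0$ does happen to hold there, but proving it in general is essentially the whole content of the lemma, not a bookkeeping afterthought.)

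The paper sidesteps this by never separating the left and right factors of $P$: it expands the full conjugation $\widetilde{A}=P\widetilde{B}P^T$ block by block and, with $X=P_1-P_2$, $Y=P_4-P_3$, reads off the four identities $A = XBX^T+I-XX^T$, $A=XBY^T+I-XY^T$, $A=YBY^T+I-YY^T$, $A=YBX^T+I-YX^T$ (each of which is already of the form $A-I=X(B-I)Y^T$, etc.). Here the correction terms $I-XX^T$ appear automatically with exactly the right coefficients, so summing the four gives $4(A-I)=(X+Y)(B-I)(X+Y)^T$ with no orthogonality assumption on $Z$ at all. To repair your argument you would either switch to this two-sided block expansion, or separately prove $(A-I)(ZZ^T-I)=0$, which your current outline does not do.
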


\begin{proof}
$\widetilde{A} = P\widetilde{B}P^{-1}$ implies
$$\begin{pmatrix} A & I-A \\ I-A & A \end{pmatrix} = \begin{pmatrix} P_1 & P_2 \\ P_3 & P_4 \end{pmatrix} \begin{pmatrix} B & I-B \\ I-B & B \end{pmatrix} \begin{pmatrix} P_1^T & P_3^T \\ P_2^T & P_4^T \end{pmatrix}.$$
It follows from Lemma \ref{lem:3.4} that:
$$A = XBX^T + I - XX^T,$$
$$A = XBY^T + I - XY^T,$$
$$A = YBY^T + I - YY^T,$$
$$A = YBX^T + I - YX^T,$$
where $X = P_1 - P_2$ and $Y = P_4 - P_3$. Summing these four equations, we get:
$$4A = (X+Y)B(X+Y)^T + 4I - (X+Y)(X+Y)^T.$$
Hence,
$$ A - I = \left( \frac{X+Y}{2} \right) (B-I) \left(\frac{X+Y}{2}\right)^T.$$
\end{proof}

\begin{definition}
Let $S$ be an $m\times m$ matrix. We call $S$ a \newword{signed half-permutation} matrix if every row and column of $S$ has either
\begin{enumerate}
\item[1)]
exactly one nonzero entry $\alpha$ with $\alpha = \pm 1$ , or
\item[2)]
exactly two nonzero entries $\alpha$ and $\beta$ with $\alpha = \pm \frac{1}{2}$, $\beta = \pm \frac{1}{2}.$
\end{enumerate}
\end{definition}

For instance, a matrix $$\begin{pmatrix} 0&-\frac{1}{2}&0&\frac{1}{2}\\ -\frac{1}{2}&-\frac{1}{2}&0&0\\ 0&0&-1&0\\ \frac{1}{2}&0&0&\frac{1}{2} \end{pmatrix}$$ is a signed half-permutation matrix.

\begin{definition}
\label{def:int}
Let $S$ be an $m\times m$ signed half-permutation matrix. We say that an $m\times m$ matrix $T$ is an \newword{integration} of $S$ if the following conditions hold for all $i = 1, \ldots, m$.
\begin{enumerate}
\item[1)]
If the $i$-th row(resp. column) of $S$ has only one nonzero entry(which is $\pm 1$), then the $i$-th row(resp. column) of $T$ is the same as the $i$-th row(resp. column) of $S$.
\item[2)]
If the $i$-th row(resp. column) of $S$ has two nonzero entries, so that it is of form\\ $[\cdots~ \frac{(-1)^p}{2}~ \cdots~ \frac{(-1)^q}{2}~ \cdots]$, then the $i$-th row(resp. column) of $T$ is obtained by doubling one of the nonzero entries and setting the other entry to zero. In other words, the $i$-th row(resp. column) of $T$ should look like either $[\cdots~ (-1)^p~\cdots~ 0~\cdots]$ or $[\cdots~ 0~\cdots~(-1)^q~\cdots]$.

\end{enumerate}
\end{definition}

Clearly an integration of a signed half-permutation matrix is a signed \emph{permutation} matrix. For instance, the following matrix $$\begin{pmatrix} 0&-1&0&0\\ -1&0&0&0\\ 0&0&-1&0\\ 0&0&0&1 \end{pmatrix}$$ is an integration of the following signed half-permutation matrix $$\begin{pmatrix} 0&-\frac{1}{2}&0&\frac{1}{2}\\ -\frac{1}{2}&-\frac{1}{2}&0&0\\ 0&0&-1&0\\ \frac{1}{2}&0&0&\frac{1}{2} \end{pmatrix}.$$

\begin{lemma}
Every signed half-permutation matrix has an integration.
\end{lemma}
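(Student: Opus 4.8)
The plan is to encode the support pattern of a signed half-permutation matrix $S$ as a bipartite graph and to produce an integration from a perfect matching of that graph. First I would form the bipartite graph $H$ whose two vertex classes are the set of row indices $\{1,\dots,m\}$ and the set of column indices $\{1,\dots,m\}$, with an edge joining row $i$ to column $j$ precisely when $S_{ij}\neq 0$. By the definition of a signed half-permutation matrix, the vertex ``row $i$'' has degree $1$ or $2$ in $H$, and likewise for ``column $j$''; moreover the value on an edge is governed by the degrees of its endpoints, since an entry equal to $\pm 1$ can only occur in a row and a column each containing a single nonzero entry, while an entry equal to $\pm\tfrac12$ can only occur in a row and a column each containing two nonzero entries.

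The key structural observation is that these two facts together pin down the components of $H$ completely. Every row and every column contains at least one nonzero entry, so $H$ has no isolated vertices. A vertex of degree $1$ is the endpoint of an edge carrying the value $\pm 1$, and by the remark above the other endpoint of that edge also has degree $1$; hence a degree-$1$ vertex lies in a component consisting of a single edge. Every other vertex has degree exactly $2$, so every remaining component is a cycle, and since $H$ is bipartite this cycle has even length. Thus each connected component of $H$ is a single edge or an even cycle, and in either case it has a perfect matching. Taking the union of these, $H$ has a perfect matching $M$ covering every row index and every column index exactly once.

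Finally I would read off $T$ from $M$: set $T_{ij}=2S_{ij}$ when $(i,j)\in M$ and $S_{ij}=\pm\tfrac12$, set $T_{ij}=S_{ij}$ when $(i,j)\in M$ and $S_{ij}=\pm 1$, and set $T_{ij}=0$ otherwise. Since $M$ is a perfect matching of $H$, each row and each column of $T$ has exactly one nonzero entry, and that entry is $\pm 1$, so $T$ is a signed permutation matrix. A row of $S$ with a single nonzero entry $\pm 1$ has its unique edge forced into $M$ and so contributes that same entry to $T$ with all other positions zero, matching condition 1) of the definition of an integration; a row of $S$ with two nonzero entries $\pm\tfrac12$ has exactly one of its two edges in $M$, so in $T$ that entry is doubled to $\pm 1$ and the other is set to $0$, matching condition 2); the argument for columns is identical. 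Hence $T$ is an integration of $S$.

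The only step that needs care — the ``hard part,'' though it is brief — is the structural observation of the second paragraph: one must use the \emph{values} $\pm 1$ versus $\pm\tfrac12$, and not merely the row and column sums of $|S|$, to rule out components that are paths on at least three vertices, which is exactly what guarantees that no odd-length obstruction to a perfect matching can arise.
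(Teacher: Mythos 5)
Your proof is correct, and it is a cleaner packaging of the same underlying combinatorics as the paper's argument. The paper proceeds algorithmically: starting from some entry equal to $\pm\frac12$, it walks through the matrix alternating between row-partners and column-partners, doubling and zeroing entries in alternation, and asserts that the walk must return to its starting entry. That walk is precisely a traversal of one of the cycles in your bipartite support graph $H$, and the alternate doubling/zeroing is precisely the choice of a perfect matching on that even cycle. What your version buys is rigor at exactly the point the paper glosses over: you prove, using the dichotomy that a $\pm 1$ entry forces both its row and its column to be of type 1) while a $\pm\frac12$ entry forces both to be of type 2), that every component of $H$ is a single edge or an even cycle. This is what guarantees the paper's walk cannot run into a dead end (a path component on three or more vertices) and must close up with the correct parity, so that the entry where the walk terminates is consistent with the entry where it began. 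The paper simply says ``we will end up at $\alpha_1$ again since there are only finite number of entries,'' which by itself does not rule out these obstructions; your structural observation is the missing justification. The two arguments are otherwise the same construction.
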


\begin{proof}
We can find an integration $T$ of the signed half-permutation matrix $S$ by the following method:

First, pick an entry $\alpha_1$ of $S$ with value $\alpha_1 = \pm \frac{1}{2}$ in S. Replace $\alpha_1$ by $sgn(\alpha_1)$, where $sgn(\alpha)$ is $1$ if $\alpha$ is positive and is $-1$ if $\alpha$ is negative. Let $\alpha_2 = \pm \frac{1}{2}$ be the other nonzero entry in the same row as $\alpha_1$. Replace $\alpha_2$ by $0$. Let $\alpha_3 = \pm \frac{1}{2}$ be the other nonzero entry in the same column as $\alpha_2$. Replace $\alpha_2$ by $sgn(\alpha_2)$. By continuing this process, alternating row and column, we will end up at $\alpha_1$ again since there are only finite number of entries. If there is no entry with value $\pm \frac{1}{2}$ then we are done. Otherwise, we repeat this process until there is no entry with value $\pm \frac{1}{2}$.

Let $T$ be the matrix we get after performing this algorithm to $S$. Then, $T$ satisfies the two conditions of Definition \ref{def:int}.
\end{proof}

\begin{lemma}
\label{lem:ZtoQ}
Let $M$, $N$ be $m \times m$ matrices such that each entry $m_{ij}$ of $M$ satisfies $-1\leq m_{ij}\leq 1$ and every entry of $N$ is either $1$ or $-1$. If $MZ=N$ for some signed half-permutation matrix $Z$, then every entry of $M$ is either $1$ or $-1$. Furthermore, if $Q$ is an integration of $Z$ then we have $MQ = N$.
\end{lemma}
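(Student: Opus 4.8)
The plan is to verify the claim one matrix entry at a time. Fixing $i,j$, the $(i,j)$ entry of the identity $MZ=N$ reads $N_{ij}=\sum_k M_{ik}Z_{kj}$, and only those $k$ with $Z_{kj}\neq 0$ contribute to the sum. Since $Z$ is a signed half-permutation matrix, its $j$-th column has either exactly one nonzero entry (equal to $\pm 1$) or exactly two nonzero entries (each equal to $\pm\frac{1}{2}$), so there are only these two cases to analyze, and in each I want to extract that the involved entries of $M$ are $\pm 1$ and that replacing $Z$ by an integration $Q$ does not change the $(i,j)$ entry of the product.

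In the first case, writing $Z_{kj}=\pm 1$ for the unique nonzero entry, the relation collapses to $N_{ij}=M_{ik}Z_{kj}$, so $M_{ik}=Z_{kj}N_{ij}\in\{1,-1\}$ using $Z_{kj}^2=1$; and since an integration $Q$ of $Z$ leaves any such column unchanged, $(MQ)_{ij}=M_{ik}Z_{kj}=N_{ij}$ as well. This case is immediate and needs no real work.

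The crux is the second case, where the $j$-th column of $Z$ has two nonzero entries $Z_{kj}=\frac{(-1)^p}{2}$ and $Z_{lj}=\frac{(-1)^q}{2}$ with $k\neq l$. Here the relation becomes $2N_{ij}=(-1)^p M_{ik}+(-1)^q M_{il}$. The left-hand side equals $\pm 2$, while each summand on the right lies in $[-1,1]$ by the hypothesis on $M$; the elementary extremality observation "two numbers in $[-1,1]$ summing to $\pm 2$ are both equal to that common value $\pm 1$" then forces $(-1)^p M_{ik}=(-1)^q M_{il}=N_{ij}$, so $M_{ik}=(-1)^p N_{ij}$ and $M_{il}=(-1)^q N_{ij}$, both $\pm 1$. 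Now if $Q$ is an integration of $Z$, its $j$-th column is obtained from that of $Z$ by doubling one of these two entries and zeroing the other — say $Q_{kj}=(-1)^p$ and $Q_{lj}=0$, the other subcase being symmetric — and then $(MQ)_{ij}=M_{ik}(-1)^p=N_{ij}$ precisely because $M_{ik}$ was already pinned to $(-1)^p N_{ij}$.

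Finally I would assemble the two conclusions. Every row of $Z$ contains a nonzero entry (again because $Z$ is a signed half-permutation matrix), so for every column index $k$ of $M$ there is some $j$ with $Z_{kj}\neq 0$; hence each entry $M_{ik}$ is handled in one of the two cases above and is therefore $\pm 1$, proving the first assertion. The same case analysis establishes $(MQ)_{ij}=N_{ij}$ for every pair $i,j$, which is exactly $MQ=N$. I do not anticipate a genuine obstacle here: the only non-routine point is the extremal observation about sums in $[-1,1]$, and the rest is bookkeeping that matches the definition of integration column by column.
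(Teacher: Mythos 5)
Your proof is correct and follows essentially the same route as the paper's: a column-by-column case analysis on whether the relevant column of $Z$ has one entry equal to $\pm 1$ or two entries equal to $\pm\frac12$, with the extremal observation that two numbers in $[-1,1]$ summing to $\pm 2$ must both equal $\pm 1$, and then matching the definition of integration entrywise. Your write-up is in fact slightly more careful than the paper's (which contains a typo repeating one equation and does not spell out why every entry of $M$ is covered), but the underlying argument is identical.
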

\begin{proof}
If $j$-th column of $Z$ has an entry $z_{ij}=\pm 1$, then $m_{ki}=\pm n_{kj}=\pm 1$ for all $k=1,\dots,m$. Otherwise, it has two nonzero entries, $z_{i_1j}=\pm \frac{1}{2}$ and $z_{i_2j}=\pm \frac{1}{2}$ for some $i_1$ and $i_2$. In this case, $-1\le m_{ki_1}z_{i_1j}+m_{ki_2}z_{i_2j}\le 1$, and the equalities are satisfied only if $m_{ki_1}=2n_{kj}z_{i_1}$ and $m_{ki_1}=2n_{kj}z_{i_1}$. Hence every entry of $M$ is either 1 or -1. Moreover, if $Q$ is an integration of $Z$, then it's a signed permutation matrix with $q_{ij}=z_{ij}$ if $z_{ij}=\pm 1$ and $q_{ij}=2z_{ij}$ or 0 if $z_{ij}=\pm \frac{1}{2}$. Therefore $m_{ki}q_{ij}=n_{kj}$, and we have $MQ=N$.
\end{proof}

Now we are ready to prove Proposition \ref{prop:3.3}, which would finish off the proof of Theorem \ref{thm:main}.

\begin{proof}[Proof of Proposition \ref{prop:3.3}]
By Lemma ~\ref{lem:3.5}, $A-I=Z(B-I)Z^T$, where $Z=(P_1-P_2-P_3+P_4)/2$. We can see that $Z$ is a signed half-permutation matrix. Let $Q$ be an integration of $Z$. Obviously $Q^T$ is an integration of $Z^T$. Since $A,B$ are Seidel matrices, every entry of $A-I$ is either $1$ or $-1$, and every entry of $Z(B-I)$ is between $-1$ and $1$. By Lemma~\ref{lem:ZtoQ}, every entry of $Z(B-I)$ is either $1$ or $-1$, and $A-I=Z(B-I)Q^T$. Using Lemma~\ref{lem:ZtoQ} again on the transpose of $Z(B-I)$, we get $Z(B-I)=Q(B-I)$. Therefore $A-I=Q(B-I)Q^T=QBQ^T-I$, hence $A=QBQ^T$.
\end{proof}


\textbf{Acknowledgment} We would like to thank Tanya Khovanova for introducing a puzzle that led us to the study of rainbow graphs: A sultan decides to check how wise his two wise men are. The sultan chooses a cell on a chessboard and shows it only to the first wise man. In addition, each cell on the chessboard either contains a pebble or is empty. Then, the first wise man either removes a pebble from a cell that contains a pebble or adds a pebble to an empty cell. Next, the second wise man must look at the board and guess which cell was chosen by the sultan. The two wise men are permitted to agree on the strategy beforehand. The strategy for ensuring that the second wise man will always guess the chosen cell comes from assigning a rainbow coloring on a $64$-regular graph.

We also thank the anonymous referees for pointing us to the reference \cite{Lazebnik}, \cite{Ustimenko}.

\bibliographystyle{plain}	
\bibliography{seidel}		

\emph{E-mail address}: suhooh@umich.edu

\emph{E-mail address}: hcyoo@kias.re.kr

\emph{E-mail address}: tedyun@math.mit.edu

\end{document}